\newtheorem{lemma1}[equation]{}
\newenvironment{lemma}{\begin{lemma1}{\bf Lemma.}}{\end{lemma1}}
\newenvironment{theorem}{\begin{lemma1}{\bf Theorem.}}{\end{lemma1}}
\newenvironment{proposition}{\begin{lemma1}{\bf Proposition.}}{\end{lemma1}}
\newenvironment{corollary}{\begin{lemma1}{\bf Corollary.}}{\end{lemma1}}
\newenvironment{remark}{\begin{lemma1}{\bf Remark.}\rm}{\end{lemma1}}
\newenvironment{definition}{\begin{lemma1}{\bf Definition.}}{\end{lemma1}}
\newenvironment{question}{\begin{lemma1}{\bf Question.}}{\end{lemma1}}
\newenvironment{remark*}{{\bf Remark.}}{}
\newenvironment{example*}{{\bf Example.}}{}
\newcommand\Hom{{\rm Hom}}
\title{Curve Classes on Rationally Connected Varieties}
\date{25 June, 2012}
\author{Runhong Zong}
\address{Department of Mathematics, Princeton University, A5 Fine Hall Washington Road, Princeton, NJ, 08544}
\email{rzong@math.princeton.edu}
\begin{document}
\maketitle

\begin{abstract}
This note proves every curve on a rationally connected variety is algebraically equivalent to a $\mathbb{Z}$-linear combination of rational curves.

\end{abstract}

\section{Introduction}

In \cite{kollarportugaliae} and \cite{voisin}, the following question is asked by Professor J\'anos Koll\'ar and Professor Claire Voisin:
\begin{question}
For a smooth projective rationally connected variety over $\mathbb{C}$ with dimension $n$, is every integral Hodge $(n-1,n-1)$--class a $\mathbb{Z}$--linear combination of cohomology classes of rational curves?
\end{question}

This question can be separated into two questions, as in \cite{voisin}:
\begin{itemize}
\item  For a smooth projective rationally connected variety over $\mathbb{C}$, is every integral Hodge $(n-1,n-1)$--class a $\mathbb{Z}$--linear combination of cohomology classes of curves?
\item  For a smooth projective rationally connected variety over $\mathbb{C}$, is every curve class a $\mathbb{Z}$--linear combination of cohomology classes of rational curves?

\end{itemize}
While generally unknown, the dimension $3$ case of the first question is implied by the following result of Professor Claire Voisin:
\begin{theorem}
\cite{voisinuniruled} For a smooth projective 3-fold which is uniruled or Calabi-Yau, every integral Hodge $(2,2)$--class is a $\mathbb{Z}$--linear combination of cohomology classes of curves.
\end{theorem}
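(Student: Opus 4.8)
The plan is to reduce the theorem to the vanishing of a single finite group and then to kill it using the geometry of the two cases. Set
\[
Z^4(X) \;:=\; \mathrm{Hdg}^4(X,\mathbb{Z})\,\big/\,\bigl\langle\,[C]\ :\ C\subset X\ \text{a curve}\,\bigr\rangle .
\]
First I would record that $Z^4(X)$ is finite. Indeed, by the Lefschetz $(1,1)$-theorem every rational Hodge class in $H^2(X,\mathbb{Q})$ is a combination of divisor classes, and cup-product with an ample class $\omega$ gives an isomorphism of Hodge structures $L\colon H^2(X,\mathbb{Q})\xrightarrow{\ \sim\ }H^4(X,\mathbb{Q})$ carrying algebraic classes to algebraic classes; since $L^{-1}$ preserves Hodge classes, every Hodge class in $H^4(X,\mathbb{Q})$ is a $\mathbb{Q}$-combination of classes of curves. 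Hence $Z^4(X)\otimes\mathbb{Q}=0$, so $Z^4(X)$ is finite and the entire content of the theorem is that this torsion group vanishes.

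For the uniruled case I would exploit the sweeping family of rational curves through a decomposition of the diagonal. After checking that $Z^4$ is a birational invariant of smooth projective threefolds (via a common resolution and the blow-up formula), I may replace $X$ by a convenient model adapted to the maximal rationally connected fibration $X\dashrightarrow B$, whose general fibre is rationally connected. The Bloch--Srinivas argument applied to this fibre produces, for some integer $N$, a decomposition
\[
N\,\Delta_X \;=\; Z_1 + Z_2 \qquad \text{in } \mathrm{CH}^3(X\times X),
\]
with $Z_1$ supported on $D\times X$ for a divisor $D$ and $Z_2$ supported over the base. Letting this correspondence act on a Hodge class $\alpha$ expresses $N\alpha$ as the sum of a class pushed forward from (a resolution of) the surface $D$ and a class manufactured from the rational curves; the former is algebraic by the integral Lefschetz $(1,1)$-theorem on a surface, and the latter is by construction a combination of rational curve classes.

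The genuine obstacle is the passage from $\mathbb{Q}$- to $\mathbb{Z}$-coefficients: the displayed identity only controls $N\alpha$, whereas I must annihilate $Z^4(X)$ exactly. Here I would bring in the intermediate Jacobian $J^3(X)$ and the Abel--Jacobi map $\Phi\colon \mathrm{CH}^2(X)_{\mathrm{hom}}\to J^3(X)$. Since a uniruled threefold has $h^{3,0}=0$, the torus $J^3(X)$ is an abelian variety, and the aim is to produce an algebraic family of $1$-cycles --- again drawn from the rational curves covering $X$ --- whose cylinder (Abel--Jacobi) map is surjective onto $J^3(X)$ and whose cycle classes, together with intersections of divisors, exhaust $\mathrm{Hdg}^4(X,\mathbb{Z})$ integrally. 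Establishing this surjectivity while simultaneously controlling the integral cycle class, not merely a multiple of it, is the crux of the argument.

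The Calabi--Yau case needs a separate treatment, since $\mathrm{CH}_0(X)$ is no longer trivial and Bloch--Srinivas does not apply. Because $h^{2,0}=0$ for a Calabi--Yau threefold, $H^2(X,\mathbb{Z})$ and hence, by hard Lefschetz duality, all of $H^4(X,\mathbb{Z})$ consists of Hodge classes, so the task becomes the purely integral one of realising every class in $H^4(X,\mathbb{Z})$ by curves. I would attack this with a sufficiently ample complete linear system, using the incidence correspondence of its members to generate enough $1$-cycles, again reducing to surjectivity of an Abel--Jacobi map together with integral generation by these cycles and by products of divisors. In both cases the unifying theme, and the main difficulty, is the integral (torsion) control of $Z^4(X)$ rather than the rational statement, which is elementary.
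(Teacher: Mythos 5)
First, note that the paper does not prove this statement at all: it is quoted verbatim from \cite{voisinuniruled} as an input to Corollary \ref{corollary 3}, so there is no internal proof to compare against. Your proposal must therefore stand on its own as a proof of Voisin's theorem, and as such it has a genuine gap. What you actually establish is only the preliminary reduction: that the defect group $Z^4(X)=\mathrm{Hdg}^4(X,\mathbb{Z})/\langle [C]\rangle$ is finite. This follows from hard Lefschetz plus the Lefschetz $(1,1)$-theorem (or, in the uniruled case, from Bloch--Srinivas), is standard, and --- as you yourself say --- is ``elementary.'' The entire content of the theorem is the vanishing of this finite torsion group, and at precisely that point your argument stops: both the uniruled and the Calabi--Yau paragraphs end by naming the integral statement as ``the crux'' without supplying an argument for it. A proof that correctly isolates the hard step but does not carry it out is not a proof.

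Moreover, the tool you propose for the crux --- surjectivity of an Abel--Jacobi (cylinder) map onto $J^3(X)$ --- is not the right lever. The Abel--Jacobi map concerns $1$-cycles that are already homologous to zero; its surjectivity says nothing, by itself, about whether a given integral Hodge class in $H^4(X,\mathbb{Z})$ is the class of a cycle, which is a question about the cokernel of the cycle class map rather than about its kernel. Voisin's actual mechanism is different: she takes a sufficiently ample linear system $|L|$ on $X$ and shows, by an infinitesimal analysis of the variation of Hodge structure of the surfaces $S\in |L|$ (a Noether--Lefschetz--type statement about the Hodge loci of the lifted classes), that every $\alpha\in \mathrm{Hdg}^4(X,\mathbb{Z})$ is the Gysin image of an \emph{integral} $(1,1)$-class on some smooth member $S$, where it is algebraic by Lefschetz $(1,1)$ on the surface. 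The hypotheses ($K_X$ trivial, or uniruledness after a reduction through the minimal model program and birational invariance of $Z^4$) enter exactly to guarantee the required surjectivity of the infinitesimal period map for $|L|$. If you want to complete your outline, this is the missing idea you would need to import; neither the Bloch--Srinivas decomposition nor Abel--Jacobi surjectivity will produce the integral statement.
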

A restricted form of the second question can be traced back to Fano and is believed among the community (\cite{kollarportugaliae}) to be raised by Professor V.A.~Iskovskikh around 1970's.
\begin{question}\label{i}
Let $X$ be a smooth projective Fano variety over $\mathbb{C}$. Is every curve $C$ in $X$ homologous to a $\mathbb{Z}$-linear combination of homology classes of rational curves?
\end{question}

We will solve the second question in this note, resulting in the following main theorem:

\begin{theorem}\label{maintheorem}  Let $X$ be a smooth projective rationally connected variety over $\mathbb{C}$
then every curve on $X$ is algebraically equivalent to a $\mathbb{Z}$-linear combination of rational curves.
\end{theorem}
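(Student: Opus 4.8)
Since every curve is a finite sum of irreducible curves, and passing to the normalization only rescales the cycle class of an irreducible curve by its multiplicity, I would first reduce to proving the statement for the cycle $[C]$ attached to a map $f\colon C\to X$ from a smooth projective curve of genus $g$, birational onto its image. Observe also that it suffices to prove that $[C]+\sum_i[R_i]$ is algebraically equivalent to a $\mathbb{Z}$-linear combination of rational curves for some rational curves $R_i$, since the classes $[R_i]$ may then be moved to the other side; thus one is free to add rational curves to $C$ at will. Throughout I use rational connectedness in its strong form: through any finite set of general points of $X$, with prescribed general tangent directions, there pass very free rational curves, and any very free curve has unobstructed deformations.

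\textbf{Reduction to a very free curve.} Attach to $C$ sufficiently many general very free rational curves $R_1,\dots,R_k$ at general points to form a comb $C^\ast=C\cup R_1\cup\dots\cup R_k$. By the standard smoothing results for combs with very free teeth on rationally connected varieties, $C^\ast$ is the special fibre of a flat family whose general fibre $B$ is a smooth curve that is moreover very free; since the cycle class is locally constant in a flat family, $[B]\equiv[C]+\sum_i[R_i]$. By the previous paragraph it therefore suffices to prove the theorem for a very free smooth curve $B$, and from now on I may assume $B$ is very free, so that $H^1(B,f^\ast T_X)=0$ and the space of stable maps to $X$ is smooth of the expected dimension at $[f]$.

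\textbf{Degeneration of the source.} The heart of the argument is to show that a very free curve $B$ of genus $g$ is, as a cycle, algebraically equivalent to a sum of rational curves. Because $B$ is very free, the space of stable maps $\overline{M}_g(X,\beta)$ with $\beta=[B]$ is smooth at $[f]$ and dominates $\overline{M}_g$ near $[f]$. I would then follow a one-parameter degeneration of the domain, from the smooth genus-$g$ curve to a connected nodal curve $B_0$ all of whose components are rational and whose dual graph has first Betti number $g$ (so that the arithmetic genus stays equal to $g$, as flatness demands), lifting it to a family of stable maps to $X$. Pushing forward to cycles produces a connected arc in the Chow variety of $X$ joining $[B]$ to the class of the image of $B_0$, and the latter is a sum of rational curves. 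Hence $[B]\equiv\sum_j[\Gamma_j]$ with each $\Gamma_j$ rational, and unwinding the reductions gives $[C]\equiv\sum_j[\Gamma_j]-\sum_i[R_i]$, a $\mathbb{Z}$-linear combination of rational curves.

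\textbf{The main obstacle.} The delicate point is this last step. While very-freeness lets one deform the complex structure of the domain freely near the smooth curve, it does not by itself guarantee that the degeneration remains realizable by maps to $X$ all the way into the deep boundary of $\overline{M}_g$, where the new nodes can force obstructions in $H^1$. I expect the resolution to be to build $B$ with extra positivity, by attaching further very free teeth, so that $f^\ast T_X$ stays sufficiently ample along the chosen degeneration and the forgetful map to $\overline{M}_g$ remains dominant; equivalently, to acquire the nodes one non-separating node at a time, at each stage using a normal-bundle positivity (or bend-and-break) argument to drop the geometric genus by one while preserving realizability in $X$, iterating until the geometric genus reaches zero. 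Controlling these obstructions uniformly, so that the process terminates in an honest configuration of rational curves, is the technical crux.
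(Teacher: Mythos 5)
Your reductions (splitting $C$ into irreducible components, moving the added rational curves to the other side of the equivalence, and smoothing a comb with very free teeth so as to replace $C$ by a smooth very free curve $B$ with $H^1(B,f^*T_X)=0$) are sound and run parallel to the paper's use of its comb-smoothing lemma. The problem is the step you yourself flag as the crux: you never actually degenerate $B$ to a configuration of rational curves. Saying you will ``follow a one-parameter degeneration of the domain \dots\ lifting it to a family of stable maps to $X$'' presupposes exactly what has to be proved, and your proposed remedy --- acquiring nodes one at a time by bend-and-break while ``controlling the obstructions uniformly'' --- is neither carried out nor the mechanism that actually closes the argument. The correct way to finish is not deformation-theoretic control deep in the boundary but properness together with irreducibility of the target: since $\overline{M}_{g,0}(X,\beta)$ is proper, the forgetful map to $\overline{M}_{g,0}$ restricted to the irreducible component through $[f]$ has closed image; dominance at $[f]$ (which your $H^1$-vanishing does give) then forces that image to be all of the irreducible space $\overline{M}_{g,0}$, so a totally degenerate stable curve has a preimage in that component, and that preimage is joined to $[f]$ by an irreducible curve inside the component, yielding algebraic equivalence of the pushed-forward cycles. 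One must then also check that such a preimage has all components rational; the components contracted by stabilization are rational chains (the elliptic case being excluded by connectedness), exactly as in the paper's Lemma~\ref{contraction}.

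For comparison, the paper takes a slightly different and safer route: it passes to $Y=X\times\mathbb{P}^1\to\mathbb{P}^1$, uses the Graber--Harris--Starr construction to make the smoothed multisection \emph{flexible}, i.e.\ dominant onto the unique component of $\overline{M}_{g,0}(\mathbb{P}^1,d)$ parametrizing flat covers (irreducible by Fulton's theorem on Hurwitz schemes), and then degenerates the branched cover of $\mathbb{P}^1$ rather than the abstract domain curve, invoking properness of $\varphi$ to lift the degeneration back. Your route is essentially the variant attributed to Koll\'ar in the paper's closing remark and deferred there to a subsequent paper; it can be made to work, but as written your proposal is incomplete at the decisive step.
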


We now outline the idea of the proof. We first lift any irreducible curve $C$ in $X$ to $X \times \mathbb{P}^1$, and we regard the latter as a fibration over $\mathbb{P}^1$. By the result 
of \cite{ghs} (as we will explain later), there are very free rational curves which are horizontal with respect to the
projection to $\mathbb{P}^1$, and free rational curves supported in the fibers $X \times \{p\}$. Just add enough of these rational curves to form a comb which can be
smoothed to a new horizontal curve $\hat{C}$ with $H^1( \hat{C},{\mathcal{N}}_{\hat{C}} ) = 0$. This is ``flexible"
in the sense of \cite{ghs}. Namely, the map $$\overline{\mathcal{M}}_{g,0}( X\times \mathbb{P}^1, \beta) \to \overline{\mathcal{M}}_{g,0}(\mathbb{P}^1, d)$$ as defined in \cite{BM}  will be proper and smooth at the point represented by the curve $\hat{C}$. So we can let $\mathcal{U}$ denote the irreducible component containing this point represented by $\hat{C}$. And we have that this forgetful map will map $\mathcal{U}$ surjectively to the main component of $\overline{\mathcal{M}}_{g,0}(\mathbb{P}^1, d)$. Here $g$ (resp.~$\beta$, $d$) is the genus (resp.~cohomology class, degree over $\mathbb{P}^1$) of $\hat{C}$. This ``stabilization" map of stable map spaces will contract non-stable components which emerge after projection to $\mathbb{P}^1$. All non-stable components are either:
\begin{itemize}
\item rational curves with at most $2$ marked points,
\item (arithmetic) elliptic curves with no marked points.
\end{itemize}
 We can exclude the second case of elliptic curves by recalling that deformations of $\hat{C}$ will always be connected.

 Now the Hurwitz scheme as the main component of $\overline{\mathcal{M}}_{g,0}(\mathbb{P}^1, d)$ is irreducible, and contains points corresponding to completely degenerated covers: covers where each component is a $\mathbb{P}^1$. So just degenerating the image of $\hat{C}$ to such a cover as a sum of rational curves--by the surjectivity discussed above and what we remarked before about the non-stable components, we will get a sum of rational curves in $\overline{\mathcal{M}}_{g,0}( X\times \mathbb{P}^1, \beta)$ algebraically equivalent to $\hat{C}$. Push forward this equivalence relation back to $X$, we will get the result.

Recalling that algebraic equivalence implies cohomological equivalence and combining theorem $1.2$, we have:
\begin{corollary}  \label{corollary 3}
For a smooth projective rationally connected 3-fold, every integral Hodge $(2,2)$-class is a $\mathbb{Z}$-linear combination of cohomology classes of rational curves.
\end{corollary}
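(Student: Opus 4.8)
The plan is to obtain this as a purely formal consequence of Theorem 1.2 together with the Main Theorem (\ref{maintheorem}), using only the standard fact that over $\mathbb{C}$ algebraic equivalence is finer than cohomological equivalence: a cycle algebraically equivalent to zero is cohomologically trivial, so algebraic equivalence implies equality of cohomology classes. No new geometry is needed beyond these two inputs; all the substantive work has already been done.

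First I would observe that a rationally connected variety is in particular uniruled, so a smooth projective rationally connected 3-fold $X$ satisfies the hypothesis of Theorem 1.2. Hence, given an integral Hodge $(2,2)$-class $\alpha \in H^4(X,\mathbb{Z})$, Theorem 1.2 expresses it as $\alpha = \sum_i n_i [C_i]$ with $n_i \in \mathbb{Z}$ and each $C_i$ an irreducible curve on $X$; here I use that for $n = \dim X = 3$ the relevant classes in the question of the introduction are exactly the $(2,2)$-classes.

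Next I would apply the Main Theorem to each $C_i$ separately: each $C_i$ is algebraically equivalent to a $\mathbb{Z}$-linear combination $\sum_j m_{ij} R_{ij}$ of rational curves. Passing to cohomology and invoking that algebraic equivalence implies cohomological equivalence, we obtain $[C_i] = \sum_j m_{ij} [R_{ij}]$ in $H^4(X,\mathbb{Z})$. Substituting back yields $\alpha = \sum_{i,j} n_i m_{ij} [R_{ij}]$, which exhibits $\alpha$ as a $\mathbb{Z}$-linear combination of cohomology classes of rational curves, as required.

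The only points requiring care here are bookkeeping ones: matching the hypotheses (rationally connected $\Rightarrow$ uniruled, and $(n-1,n-1) = (2,2)$ when $n=3$) and invoking the algebraic-to-cohomological implication correctly, together with the harmless fact that the linear combinations may carry negative integer coefficients throughout. Since the genuine difficulty is entirely concentrated in the Main Theorem, I do not expect any real obstacle at this stage; the corollary is a two-line deduction once both theorems are in hand.
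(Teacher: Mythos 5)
Your proposal is correct and follows exactly the same route as the paper: combine Theorem 1.2 (applicable since rationally connected implies uniruled) with the Main Theorem, using that algebraic equivalence implies cohomological equivalence. The paper presents this as an immediate combination in one sentence; your version just spells out the same bookkeeping.
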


In an upcoming paper \cite{cycle} of the author with Zhiyu Tian, we will explore further application of the "trivial product" trick in the proof.

The author would like to thank Professor J\'anos Koll\'ar for his constant support and enlightening comments on this proof, to Professor Claire Voisin who pointed out that one can actually prove the rational equivalence rather than algebraic equivalence, also to Professor Burt Totaro who first introduced to the author the question for rationally connected 3-folds and pointed out several ambiguities in first editions of this note, and the most thanks should be attributed to Zhiyu Tian, who taught the author the story of \cite{ghs} and knowledge about smoothing curves and moduli space of stable maps--without his help the author would never even dreamt of getting these results.

\section{Preliminaries}

\begin{definition}
\emph{Let $X$ be smooth projective variety over $\mathbb{C}$. It is \emph{ rationally connected} if there is a rational curve passing through $2$ general points of $X$. By a \emph{free (resp.~very free) curve} in $X$ we mean a rational curve $C\subset X$ with $T_X|_{C}$ non-negative (resp.~ample). It is well-known that for $X$ to be rationally connected is equivalent to the existence of very free curves on $X$.}
\end{definition}

\begin{definition}\label{defstable}
\emph{Let $C$ be a connected nodal curve. Let $X$ be a variety. We call a map $f: C \to X$ a \emph{stable map} if every component of $C$ which is mapped to a point is either:
\begin{itemize}
\item A curve with arithmetic genus $>1$
\item A curve with arithmetic genus $1$ with at least $1$ nodal point.
\item A curve with arithmetic genus $0$ with at least $3$ nodal points.
\end{itemize}}

\end{definition}
It is well-known that for any homology class $\beta \in H_2(X,\mathbb{Z})$ there is a good compactified moduli stack $\overline{\mathcal{M}}_{g,0}(X,\beta)$ parameterizing all stable maps $f: C\to X$ with $C$ a nodal curve of genus $g$ and $f_{*}[C]=\beta$, for details see \cite{FP}.

\begin{definition}
\emph{
\label{defcomb}
Let $k$ be an arbitrary field.
A \emph{comb with  $n$ teeth} over $k$ is a projective curve
with $n+1$ irreducible components $C_0,C_1,\dots,C_n$ over $\bar k$
satisfying the following conditions:
\begin{enumerate}
\item The curve $C_0$ is defined over $k$.
\item The union $C_1\cup\dots\cup C_n$ is defined over $k$.
(Each individual curve may not be defined over $k$.)
\item The curves $C_1,\dots,C_n$ are smooth rational curves
disjoint from each other, and each of them meets $C_0$
transversely in a single smooth point of $C_0$
(which may not be defined over $k$).
\end{enumerate}
The curve $C_0$ is called the \emph{handle} of the comb,
and $C_1,\dots,C_n$ are called the \emph{teeth}.
A \emph{rational comb} is a comb whose handle is a smooth
rational curve.}

\end{definition}
\section{Proof of the Main Theorem}

Let $\mathcal{Y}$ be a smooth projective variety with a morphism $\pi: \mathcal{Y} \to \mathbb{P}^1$ whose general fibers are rationally connected.

Let $\beta \in H_2(\mathcal{Y},\mathbb{Z})$ be a class having intersection
number $d$ with a fiber of the map $\pi$.  We have then a natural
morphism as in \cite{BM}:
$$
\varphi : \overline{\mathcal{M}}_{g,0}(\mathcal{Y},\beta) \to \overline{\mathcal{M}}_{g,0}(\mathbb{P}^1,d)
$$
defined by composing a map $f : C \to \mathcal{Y}$ with $\pi$ and collapsing
components of $C$ as necessary to make the composition $\pi \circ f$ stable.

\begin{lemma}\label{contraction}
For a stable map $f : C \to \mathcal{Y}$ which is non-constant, the components that are contracted under $$
\varphi : \overline{\mathcal{M}}_{g,0}(\mathcal{Y},\beta) \to \overline{\mathcal{M}}_{g,0}(\mathbb{P}^1,d)
$$ are all rational.
\end{lemma}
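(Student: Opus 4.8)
The plan is to analyze the stabilization procedure defining $\varphi$ one component at a time, and to classify the components that can be contracted by their arithmetic genus and their number of special points. By construction $\varphi$ sends a stable map $f : C \to Y$ to the stabilization of the composite $\pi \circ f : C \to \mathbb{P}^1$, so a component $C_i \subset C$ is contracted exactly when $\pi \circ f$ is constant on $C_i$ --- equivalently, when $f(C_i)$ lies in a single fibre of $\pi$ --- \emph{and} when, remembering only $\pi\circ f$ and forgetting $f$, the component $C_i$ violates the stability condition of Definition \ref{defstable}. Since we work in $\bar{M}_{g,0}$ there are no marked points, so the only special points on $C_i$ are the nodes at which it meets the closure of its complement in $C$.

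First I would enumerate the unstable possibilities. A component of arithmetic genus $\geq 2$ is stable no matter how many special points it carries, and a component of genus $0$ or $1$ carrying enough nodes is stable as well; none of these is ever contracted. Hence a contracted component $C_i$ is forced to be either (i) a rational curve meeting the rest of $C$ in at most two nodes, or (ii) a curve of arithmetic genus $1$ meeting the rest of $C$ in no node whatsoever. Case (i) already produces a rational component, so the lemma comes down to excluding case (ii). One can also see on general grounds that (ii) must be impossible: $\varphi$ preserves the arithmetic genus $g$, and contracting a connected tree of rational curves leaves the genus unchanged, whereas contracting a genus-$1$ piece would drop it.

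The one delicate step is to rule out (ii), and for this I would invoke connectedness of the domain. A component carrying no node is disjoint from every other component, so connectedness of $C$ forces $C = C_i$; thus $C$ would be a single irreducible curve of arithmetic genus $1$. But a contracted component lies in a fibre, so $\pi \circ f$ would then be constant on all of $C$, and its stabilization would be a constant map from a smooth genus-$1$ curve with no special point, which is not a stable map at all. Consequently no genuine contraction into $\bar{M}_{g,0}(\mathbb{P}^1,d)$ occurs in this case; in the setting that matters, where $\pi \circ f$ is non-constant and $C$ therefore carries a component dominating $\mathbb{P}^1$, case (ii) simply cannot arise. Every contracted component then falls under case (i) and is rational. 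As flagged, the genus-$1$ exclusion via connectedness is the crux; the remainder is a direct reading of the stabilization rule.
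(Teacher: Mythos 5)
Your proof is correct and follows essentially the same route as the paper's: enumerate the unstable component types allowed by Definition 2.2 (rational with at most two nodes, or arithmetic genus one with no node), observe the first type is already rational, and kill the second via connectedness of $C$ together with non-constancy of $\pi\circ f$. Your added observation that contracting a genus-one piece would drop the arithmetic genus is a nice cross-check the paper omits, but the core argument is the same.
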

\begin{proof}
By Definition \ref{defstable}, the only possible non-stable components are:

\begin{itemize}

\item Smooth rational curve with at most $2$ intersection points with other components of $C$

\item A nodal rational curve or a smooth elliptic curve which is a connected component of the curve contracted at some step.

\end{itemize}

The elliptic curve case can be excluded since $C$ will always be connected after contraction.
\end{proof}
\begin{definition}
\emph{Let $f : C
\to \mathcal{Y}$ be a stable map from a nodal curve $C$ of genus $g$ to $\mathcal{Y}$ with
class $f_*[C] = \beta$. We say that $f$ is \emph{flexible} relative to
$\pi$ if the map $\varphi : \overline{\mathcal{M}}_{g,0}(\mathcal{Y},\beta) \to \overline{\mathcal{M}}_{g,0}(\mathbb{P}^1,d)$ is dominant at
the point $[f] \in \overline{\mathcal{M}}_{g,0}(\mathcal{Y},\beta)$ and $\pi: C \to \mathbb{P}^1$ is flat.}
\end{definition}

\begin{proposition}\label{degeneration}
A  {\it flexible} curve $f : C \to \mathcal{Y}$ can be degenerated to an effective sum of rational curves in $\mathcal{Y}$.
\end{proposition}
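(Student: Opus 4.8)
The plan is to transport a degeneration from the base space $\bar{M}_{g,0}(\mathbb{P}^1,d)$ up to $\bar{M}_{g,0}(Y,\beta)$ along $\varphi$, exploiting flexibility to guarantee the lift exists and irreducibility of the base to reach a totally rational configuration. Let $M_0$ denote the irreducible component of $\bar{M}_{g,0}(Y,\beta)$ containing $[f]$. Flatness of $\pi f$ means no component of $C$ is contracted by $\pi$, so $\varphi([f])$ is represented by $\pi f$ itself, a genuine degree-$d$, genus-$g$ cover with no collapsed components. Flexibility says $\varphi$ is dominant at $[f]$; since $\bar{M}_{g,0}(\mathbb{P}^1,d)$ is irreducible, $\varphi|_{M_0}$ is dominant, and being a morphism of spaces proper over $\mathbb{C}$ it is itself proper, whence $\varphi(M_0)=\bar{M}_{g,0}(\mathbb{P}^1,d)$.

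Next I would choose a boundary point $b_0=[h_0:D_0\to\mathbb{P}^1]\in\bar{M}_{g,0}(\mathbb{P}^1,d)$ for which every irreducible component of $D_0$ is a smooth rational curve: one arranges the arithmetic genus $g$ to come entirely from the loops of the dual graph and distributes the degree $d$ among the components so that no contracted component is unstable. Such totally rational stable maps form a nonempty locus, and because $\bar{M}_{g,0}(\mathbb{P}^1,d)$ is irreducible the points $\varphi([f])$ and $b_0$ are joined by a connected chain of curves lying inside it.

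By the surjectivity above I then pick a lift $[\tilde f_0:\tilde C_0\to Y]\in M_0$ with $\varphi(\tilde f_0)=b_0$, and I claim its image cycle is an effective sum of rational curves. Indeed, stabilization leaves every non-contracted component of $\pi\tilde f_0$ untouched, so each component of $\tilde C_0$ dominating $\mathbb{P}^1$ is isomorphic to a component of $D_0$, hence rational; and each component of $\tilde C_0$ lying in a fibre of $\pi$ is either collapsed by $\varphi$, whence rational by Lemma \ref{contraction}, or survives as a contracted component of $D_0$, whence rational by our choice of $b_0$. Thus every component of $\tilde C_0$ is rational, and $\tilde f_{0*}[\tilde C_0]=\sum_i (\deg \tilde f_0|_{\tilde C_{0,i}})\,[\tilde f_0(\tilde C_{0,i})]$ is an effective sum of classes of rational curves in $Y$.

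Finally, since $[f]$ and $[\tilde f_0]$ lie in the connected space $M_0$, the universal stable map over $M_0$ exhibits $f_*[C]$ and $\tilde f_{0*}[\tilde C_0]$ as members of a single algebraic family of $1$-cycles on $Y$; pushing this family forward shows $f_*[C]$ is algebraically equivalent to the effective sum of rational curves just produced, which is the assertion. I expect the main obstacle to be the lifting step: upgrading dominance at the single point $[f]$ to genuine surjectivity of $\varphi|_{M_0}$, and arranging a preimage of $b_0$ in the \emph{same} connected component as $[f]$, so that the resulting relation is a true algebraic equivalence rather than merely an equality of numerical classes. This is precisely where the (cited) irreducibility of $\bar{M}_{g,0}(\mathbb{P}^1,d)$ does the essential work, by making the totally rational boundary point $b_0$ accessible from $\varphi([f])$.
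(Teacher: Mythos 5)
Your proposal is correct and follows essentially the same route as the paper: properness of $\varphi$ plus dominance at $[f]$ gives surjectivity onto the relevant irreducible component of $\bar{M}_{g,0}(\mathbb{P}^1,d)$, one degenerates downstairs to a totally rational stable map, lifts, and handles the collapsed components via Lemma \ref{contraction}. The only imprecision is your claim that $\bar{M}_{g,0}(\mathbb{P}^1,d)$ is irreducible --- in fact only the component whose general member is a flat degree-$d$ cover is unique (Fulton's irreducibility of the Hurwitz scheme, which is what the paper cites) --- so your boundary point $b_0$ must be chosen as a degeneration of flat covers lying in that component; producing such a totally rational degeneration is exactly the ``elementary'' step the paper's proof alludes to.
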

\begin{proof}
It is a classical fact that the variety $\overline{\mathcal{M}}_{g,0}(\mathbb{P}^1,d)$ has a unique
irreducible component whose general member corresponds to a flat map
$g : C \to \mathbb{P}^1$, see \cite{F}. Since the map
$\varphi : \overline{\mathcal{M}}_{g,0}(\mathcal{Y},\beta) \to \overline{\mathcal{M}}_{g,0}(\mathbb{P}^1,d)$ is proper, and
$f : C \to \mathcal{Y}$ is {\it flexible} then $\varphi$ will be
surjective on the component of $f : C \to \mathcal{Y}$.  By Lemma \ref{contraction} it is enough to find a degeneration of $C \to \mathbb{P}^1$ in $\overline{\mathcal{M}}_{g,0}(\mathbb{P}^1,d)$ as a sum of rational curves, which is elementary.

\end{proof}

\begin{lemma} \label{cancel}
\cite{araujo} Let $X$ be a smooth projective variety
of dimension at least $3$
over an algebraically closed field.
Let   $D\subset X$ be  a smooth irreducible curve and
 $M$  a  line bundle on $D$.
Let $C\subset X$ be a very free rational curve intersecting $D$
and let
$\hat{C}$  be a family of rational curves  on $X$ parametrized by
a neighborhood of $[C]$ in $\Hom(\mathbb{P}^1,X)$.

Then there are
curves $C_1,\dots,C_p\in \hat{C}$ such that
$D^*=D\cup C_1\cup \dots \cup C_p$ is a comb and
satisfies the following conditions:
\begin{enumerate}
\item The sheaf ${\mathcal{N}}_{D^*}$ is generated by global sections.
\item $H^1(D^*,{\mathcal{N}}_{D^*}\otimes M^*)=0$, where $M^*$ is the unique
line bundle on $D^*$ that extends $M$ and has degree $0$ on the $C_i$.
\end{enumerate}

\end{lemma}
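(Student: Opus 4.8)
The plan is to reprove this by the comb-smoothing technique of Graber--Harris--Starr and Koll\'ar, in which one attaches enough very free teeth to the handle $D$ to force positivity of the normal sheaf of the comb. Both conditions are statements about the rank-$(n-1)$ sheaf $N_{D^*}$ on the reducible curve $D^*$, so the first step is to break them up over the components by a partial-normalization argument. Writing $\nu\colon D\sqcup C_1\sqcup\cdots\sqcup C_p\to D^*$ for the normalization at the $p$ nodes $p_1,\dots,p_p$ (the teeth are disjoint and each meets $D$ in one point by Definition \ref{defcomb}, so this is exactly the disjoint union of the components), one has for any locally free sheaf $\sF$ on $D^*$ a sequence
$$0\to\sF\to\nu_*\nu^*\sF\to\bigoplus_{i=1}^p k_{p_i}^{\oplus(n-1)}\to 0,$$
whose long exact cohomology sequence reduces both global generation and the vanishing of $H^1$ on $D^*$ to the same statements for $\sF|_D$ and each $\sF|_{C_i}$, together with surjectivity of global sections onto the fibres at the nodes. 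A node is a local complete intersection, so $N_{D^*}$ is genuinely locally free of rank $n-1$ here and this applies.

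The crux is to understand $N_{D^*}$ on each component. A local computation at a node of two smooth branches meeting transversally in a smooth variety shows that restriction to a branch performs a \emph{positive elementary modification} of the branch's normal bundle, one unit in the direction of the other branch's tangent; concretely, on the handle there is an exact sequence
$$0\to N_D\to N_{D^*}|_D\to\bigoplus_{i=1}^p k_{p_i}\to 0,$$
and symmetrically $N_{D^*}|_{C_i}$ is a positive modification of $N_{C_i}$ at $p_i$. Since $M^*$ has degree $0$ on $C_i\cong\mathbb{P}^1$ it is trivial there, while $M^*|_D\cong M$. The teeth then require no further thought: $C_i\in\hat{C}$ is a deformation of the very free $C$, so $N_{C_i}$ is ample; because $N_{C_i}\subseteq N_{D^*}|_{C_i}$ agrees with it away from $p_i$, projecting $N_{D^*}|_{C_i}$ onto a minimal-degree summand shows every summand has degree $\ge 1$, whence $N_{D^*}|_{C_i}\otimes M^*$ is globally generated with vanishing $H^1$ for free.

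Everything therefore comes down to the handle: I must choose the teeth so that $N_D$, after the prescribed positive modifications, becomes globally generated and satisfies $H^1(N_{D^*}|_D\otimes M)=0$. This is where very-freeness of $C$ is indispensable, and it is the main obstacle. Adding teeth only at points cannot work on its own, because a single modification direction raises only one sub-line-bundle of $N_D$; what is needed is that the tangent directions $v_1,\dots,v_p$ be sufficiently general so that \emph{every} slope of the rank-$(n-1)$ bundle is pushed up, not merely its determinant. Because $C$ is very free, members of $\hat{C}$ can be made to pass through a general point of $D$ with a general transverse tangent, so I can realize positive modifications in general directions at as many general points as I please. The remaining input is the standard positivity lemma that a fixed bundle on a smooth curve, after enough general positive elementary modifications, becomes a sum of line bundles of arbitrarily large degree; applied to $N_D\otimes M$ (twisting by $M$ shifts degrees only by the fixed amount $\deg M$) this yields global generation and $H^1=0$, and likewise for $N_{D^*}|_D$ itself.

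With the component-wise facts in hand, I would finish by feeding them into the normalization sequence: the vanishing of $H^1$ of $N_{D^*}|_D\otimes M^*$ and of each $N_{D^*}|_{C_i}\otimes M^*$, together with global generation of these restrictions (which makes $H^0$ surject onto the node fibres), forces $H^1(D^*,N_{D^*}\otimes M^*)=0$, giving (2); running the identical argument with $M^*$ replaced by $\sO_{D^*}$ gives global generation of $N_{D^*}$, giving (1). The node analysis and the cohomological gluing are formal once the elementary-modification description is established; the real content, as noted above, is controlling the full normal bundle of the handle rather than its determinant, which is exactly what the genericity of the very free teeth supplies.
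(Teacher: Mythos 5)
The paper gives no proof of this lemma; it is quoted from \cite{araujo}, and your argument correctly reconstructs the standard proof given there: the normalization sequence for the nodal comb, the identification of $N_{D^*}|_D$ as a positive elementary modification of $N_D$ at the attaching points in the directions of the teeth (and dually on each tooth, where ampleness of $N_{C_i}$ plus triviality of $M^*|_{C_i}$ settles everything), and the lemma that sufficiently many general positive elementary modifications --- available because very free members of $\hat{C}$ pass through general points of $D$ with general transverse tangents --- kill $H^1$ after any fixed twist and yield global generation. The only loose phrase is that the modified bundle ``becomes a sum of line bundles of arbitrarily large degree'': on a handle of positive genus it need not split, and the correct (and sufficient) form of the positivity lemma is that $E'\otimes L$ is globally generated with $H^1(E'\otimes L)=0$ for any fixed $L$ once the number of general modifications exceeds the relevant $h^1$.
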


Which leads to the following:

\begin{lemma}\label{smoothing}
For any curve $C$ in a rationally connected variety $X$, for any integer $m\gg0$, let $C_1,...,C_m$ be an $m$-tuple of very free curves such that $C\cup C_1 \cup C_2...\cup C_m$ is a comb as in Definition \ref{defcomb}. If $m\gg 0$ and if $(C_1,C_2,...,C_m)$ is sufficiently general among such $m$-tuples, then there is a sub-comb $C\cup C_{i_1}\cup C_{i_2}...\cup C_{i_k}$, $k\leq m$ which can be deformed to an irreducible curve $C'$. Moreover, a general such deformation $C'$ has $H^1(C',{\mathcal{N}}_{C'})=0$, where ${\mathcal{N}}_{C'}$ is the normal bundle of $C'$.

\end{lemma}
\begin{remark}
We note that $C$ can be highly singular in $X$. But let $C'$ be the normalization of $C$. One can first embed it as $C' \to \mathbb{P}^3$. Then project a small deformation of the diagonal map $C'\to X \times \mathbb{P}^3$ to $X$. One can get a deformation of $C$ as a smooth sub--curve $C'\subset X$. Then we can apply Lemma \ref{cancel} to get Lemma \ref{smoothing}.

\end{remark}
Here we restate the \emph{First Main Construction} in the article \cite{ghs}.
\begin{theorem}\label{main} Assume there is a multisection $(B \subset \mathcal{Y}) \to \mathbb{P}^1$ which lies in the smooth locus of $\pi: \mathcal{Y} \to \mathbb{P}^1$. Then for every integer $m\gg 0$, there exist $m$-tuples of rational curves $(C_1,C_2,...,C_m)$ such that $B\cup C_1 \cup...\cup C_m$ is a comb that can be smoothed to a {\it flexible} curve of $\mathcal{Y} \to \mathbb{P}^1$.
\end{theorem}

Now we can prove our main theorem.

\begin{proof}
 Define $\mathcal{Y}$ to be $X\times \mathbb{P}^1$. For every irreducible curve $C\subset X$, lift $C$ to a curve $C'$ in $X\times{0}$, inside $\mathcal{Y}$. Since $\mathcal{Y}$ is rationally connected, we can add enough free curves of $\mathcal{Y}$ which are horizontal with respect to the projection $\mathcal{Y}\to \mathbb{P}^1$, such that the {\it comb} can be deformed by Lemma \ref{smoothing} to a multisection $M$ of the fibration $\pi: \mathcal{Y}\to \mathbb{P}^1$. Then by Theorem \ref{main}, since the trivial family has good reduction everywhere, we can add some other rational curves to $M$ to be smoothed to a {\it flexible} curve. Then by Proposition \ref{degeneration}, it can be degenerated to a sum of rational curves. So $C'$ is algebraically equivalent to an integral sum of rational curves in $\mathcal{Y}$. The rest is simply pushing forward by the projection back to $X$.

\end{proof}

\begin{remark}
As suggested by Professor J\'anos Koll\'ar, one can prove the main theorem directly on $X$, by smoothing $C\cup C_1\cup C_2 ...\cup C_m$ to a curve $C'$ with $H^1(C', {\mathcal{T}}_{X}|_{C'})=0$ and then use the same argument above for the natural forgetful map $$ \overline{\mathcal{M}}_{g,0}(X,\beta) \to \overline{\mathcal{M}}_{g,0}$$ which is again proper and surjective--this will be discussed in \cite{cycle}.
\end{remark}

\begin{remark}
As suggested by Professor Claire Voisin, based upon our result about algebraic equivalence, one can actually prove that all curves on $X$ are rationally equivalent to a $\mathbb{Z}$--linear combination of rational curves, by using a construction of Professor J\'anos Koll\'ar, see \cite{cycle} for details.
\end{remark}


\begin{thebibliography}{1}

\bibitem{araujo} C. Araujo and J. Koll\'ar,
    Rational Curves on Varieties, Higher Dimensional Varieties and rational points (Budapest 2001), Bolyai Soc. Math. Stud., vol. 12, Springer, Berlin, 2003, pp. 13-68.
\bibitem{BM}

K. Behrend and Yu. Manin.
\newblock Stacks of stable maps and Gromov-Witten invariants.
\newblock Duke Math. J. Volume 85, Number 1 (1996), 1-60.


\bibitem{F}W. Fulton, Hurwitz Schemes and Irreducibility of Moduli of Algebraic Curves, Ann. of Math., Vol. 90, No. 3, Nov., 1969, page 542-575.
\bibitem{FP}W. Fulton and R. Pandharipande, Notes on stable maps and quantum
cohomology, Notes on stable maps and quantum cohomology. Algebraic
geometry Santa Cruz 1995, 45-96, Proc. Sympos. Pure Math., 62, Part
2, Amer. Math. Soc., Providence, RI, 1997.
\bibitem{ghs} T. Graber, J. Harris and J. Starr, Families of rationally connected varieties.
J. Amer. Math. Soc., 16(1), 57-67  (2003).



 \bibitem{kollarbook} J\'{a}nos Koll\'{a}r.
\newblock {\em Rational curves on algebraic varieties}, volume~32 of {\em
  Ergebnisse der Mathematik und ihrer Grenzgebiete. 3. Folge. A Series of
  Modern Surveys in Mathematics [Results in Mathematics and Related Areas. 3rd
  Series. A Series of Modern Surveys in Mathematics]}.
\newblock Springer-Verlag, Berlin, 1996.

 \bibitem{kollarportugaliae} J. Koll\'ar. Holomorphic and pseudo-holomorphic curves on rationally connected varieties, Portugaliae Mathematica, Volume 67, Issue 2, (2010) 155–179.


\bibitem{cycle} Z. Tian and H. R. Zong, 1-cycles on rationally connected variety, to appear.
     \bibitem{voisinuniruled} C. Voisin,  On integral Hodge classes on uniruled and Calabi-Yau
threefolds, in {\it Moduli Spaces and Arithmetic Geometry},
Advanced Studies in Pure Mathematics 45, pp. 43-73, (2006).

\bibitem{voisin}C. Voisin, Remarks on curve classes on rationally connected varieties. Clay Mathematics Proceedings  Volume 18, 2013, 591-599.




\end{thebibliography}
\end{document}